\documentclass{amsart}
\usepackage{amsmath,amsthm,amssymb,amsfonts, hyperref}
\usepackage{xcolor}

\newcommand{\seq}{\begin{equation*}}
\newcommand{\steq}{\end{equation*}}
\newcommand{\seqn}{\begin{equation}}
\newcommand{\steqn}{\end{equation}}
\usepackage{graphicx}

\newtheorem{theorem}{Theorem}[section]
\newtheorem{lemma}[theorem]{Lemma}

\numberwithin{equation}{section}

\theoremstyle{definition}

\title{The density of the graph of elliptic Dedekind sums}

\author[S. Bartell]{Stephen Bartell}
\email{sbartell@princeton.edu}
\address{Princeton University, 7903 Frist Campus Center, Princeton, NJ, 08544, USA}

\author[A. Halverson]{Abby Halverson}
\email{halver2@stolaf.edu}
\address{St. Olaf College, 1500 St. Olaf Ave, Northfield, MN, 55057, USA}

\author[B. Schlader]{Brenden Schlader}
\email{schlbre2@bvu.edu}
\address{Buena Vista University, 610 W. 4th St., Storm Lake, IA, 50588, USA}

\author[S. Truex] {Siena Truex}
\email{struex@middlebury.edu}
\address{14 Old Chapel Rd. 3682 Middlebury College, Middlebury, VT 05753, USA}

\author[T.A. Wong]{Tian An Wong}
\email{tiananw@umich.edu}
\address{University of Michigan-Dearborn, 4901 Evergreen Rd, 2002 CASL Building, Dearborn, MI 48128, USA}

\keywords{Elliptic Dedekind sums, complex continued fractions}
\subjclass[2020]{11F67   \and 11F41  }

\begin{document}

\begin{abstract}
We show that the graph of normalized elliptic Dedekind sums is dense in its image for arbitrary imaginary quadratic fields, generalizing a result of Ito in the Euclidean case. We also derive some basic properties of Martin's continued fraction algorithm for arbitrary imaginary quadratic fields.
\end{abstract}

\date{\today}
\maketitle
\tableofcontents \addtocontents{toc}{\protect\setcounter{tocdepth}{1}}

\section{Introduction}
\subsection{The density of elliptic Dedekind sums} 
The classical Dedekind sum $s(m,n)$ is defined for $m,n\in\mathbb{Z}$, $(m,n)=1$, $n\neq 0$, by
\[
s(m,n):=\frac{1}{4n}\sum_{k=1}^{n-1}\cot\left(\pi\frac{mk}{n}\right)\cot\left(\pi\frac{k}{n}\right).
\]
They are of arithmetic interest, arising originally from the transformation law of the Dedekind eta function. It has found applications to the special values of $L$-functions, and also areas outside of number theory such as knot theory, geometric topology, and combinatorics. Grosswald and Rademacher conjectured that the values of $s(m,n)$ are dense in $\mathbb R$, and moreover the graph $\{(m/n, s(m/n)): m/n\in\mathbb Q\}$ is dense in $\mathbb R^2$ \cite{radegross}, where $s(m/n) := s(m,n)$ whenever $(m,n)=1$. The latter statement (which implies the former) was first proved by Hickerson \cite{hick}.

Many generalizations of Dedekind sums have been made, and in this paper we study elliptic Dedekind sums. These are generalizations of classical Dedekind sums to complex lattices, or imaginary quadratic number fields.  Let $L$ be a non-degenerate lattice in $\mathbb C$.  We define
$$
E_k(z)=\sum_{\substack{x\in L,\\ x+z\not= 0}} (x+z)^{-k}|x+z|^{-s}\Big|_{s=0},
$$
where the value of the sum at $s=0$ is evaluated by means of analytic continuation.  Define  the ring of multipliers for $L$ as $\mathcal O_L=\{m \in \mathbb{C} :mL\subset L\}$. It is either equal to the ring of integers or to an order in an imaginary quadratic field.  Then, following Sczech \cite{sczech}, the elliptic Dedekind sums for $L$ are defined as
$$
D(a,c)=\frac{1}{c}\sum_{\mu \in L/cL} E_1\Big(\frac{a\mu}c\Big)E_1\Big(\frac{\mu}c\Big)
$$
for $a,c\in\mathcal O_L$, $c \neq 0$. 

Assume for the rest of this paper that $\mathcal O_L$ is the ring of integers $\mathcal O_K$ of an imaginary quadratic field $K=\mathbb Q(\sqrt{-D})$.  If $K$ has class number 1, one can again define $D(a/c) = D(a,c)$ for $a/c\in K$ and $a,c$ coprime in $\mathcal O_K$. (Note that if the class number is greater than 1, the fraction is no longer well-defined.)
Ito \cite{ito} showed that the graph 
\[
\{(a/c, \tilde D(a/c)): a/c\in K\}
\]
of the normalized elliptic Dedekind sum
$$
\tilde D(a/c) = \tilde D(a,c)=(i\sqrt{|d_K|}E_2(0))^{-1}D(a,c)
$$
is dense in $\mathbb C\times \mathbb R$ when $D=2,5,7$, where $d_K$ is the discriminant of $K$. (That is, when $\mathcal O_K$ is Euclidean and $D\neq 1,3$; note $\tilde D$ is trivial if and only if $D=1,3$.) It was also shown recently by the last author and others \cite{REU2021} that the image of $\tilde D(a,c)$ is dense in $\mathbb R$ for general lattices $L$ in $\mathbb C$ (and in particular for arbitrary class number)  using a recent method of Kohnen \cite{kohnen}.

\subsection{Main result}

In this paper, we extend these results to the density of the graph for imaginary quadratic fields of arbitrary class number.  Note that the reduced fraction $a/c$ associated to any $\alpha\in K$ is only uniquely defined when $K$ has class number 1. Also, the set of factorizations of an element in $\mathcal O_K$ is bounded \cite{martin2011nonunique}. For general class number, it will suffice for us to fix any nonunique representative $a/c$ of $\alpha$ and consider $\tilde  D(\alpha) = \tilde D(a,c)$. Note that $\tilde D(a,c)= \tilde D(\lambda a,\lambda c)$ for any nonzero $\lambda\in \mathcal O_K$ by \cite[(15)]{sczech}. With this convention, we state our main result.

\begin{theorem}
\label{main}
Let $K = \mathbb{Q}(\sqrt{-D})$ with $D\neq 1,3$. Then the graph of the normalized elliptic Dedekind sum 
\begin{equation}
\label{set}
\{(\alpha, \tilde D(\alpha)) : \alpha\in K\}
\end{equation}
is dense in $\mathbb C\times \mathbb R$.
\end{theorem}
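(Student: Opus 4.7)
The plan is to follow the broad outline of Hickerson's density proof for classical Dedekind sums, as adapted by Ito to Euclidean imaginary quadratic fields, using Martin's continued fraction algorithm in place of the Euclidean algorithm when $\mathcal O_K$ is not norm-Euclidean. Two ingredients drive the argument: Sczech's reciprocity law for $D(a,c)$, which expresses $D(a,c)+D(c,a)$ (suitably normalized) as an explicit elementary function of $a$ and $c$, and Martin's algorithm, which produces convergents $p_k/r_k$ to any $\alpha\in\mathbb C$ from a sequence of partial quotients $q_0,q_1,\ldots\in\mathcal O_K$ drawn from a fixed fundamental domain.

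Iterating Sczech's reciprocity along the partial quotients of a reduced representative $a/c$ produces a finite-sum formula for $\tilde D(a,c)$ in terms of the $q_k$'s, analogous to Hickerson's formula in the classical setting. To approximate a target $(\alpha,t)\in\mathbb C\times\mathbb R$, I would truncate Martin's expansion of $\alpha$ at a large depth $n$ and then append one or more auxiliary partial quotients whose values we are free to choose among those admissible to Martin's algorithm. Appending such quotients perturbs the resulting convergent $\alpha_n$ only by an amount of order $|r_n|^{-2}$ (kept arbitrarily small by choosing $n$ large), while shifting $\tilde D(\alpha_n)$ by an expression that is essentially linear in the appended quotients, with coefficients built from the earlier $q_k$. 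Since the appended quotients range over a translate of a lattice intersected with Martin's fundamental domain, the reciprocity formula — after taking imaginary parts and normalizing — puts the possible values of $\tilde D(\alpha_n)$ in a set whose closure is all of $\mathbb R$. A diagonal extraction then produces the desired sequence with $\alpha_n\to\alpha$ and $\tilde D(\alpha_n)\to t$; the prior result that the image of $\tilde D$ is dense in $\mathbb R$ can be invoked as a sanity check that no global obstruction prevents the real coordinate from being filled out.

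The main obstacle is the non-Euclidean constraint on admissible partial quotients. Unlike the Euclidean case, one cannot freely substitute an arbitrary element of $\mathcal O_K$ at a given step, so one must verify — using precisely the structural properties of Martin's algorithm derived earlier in the paper — that the constrained set of realizable continuations still provides enough freedom to perturb $\tilde D$ densely. A secondary technical point is to ensure that the coefficient of the free quotient in the reciprocity expansion does not accidentally vanish along the chosen sequence, which would collapse the perturbation; this can be arranged by selecting the truncation depth $n$ so that the relevant algebraic combination of $q_0,\ldots,q_n$ stays bounded away from zero, an input that should follow from the quantitative approximation properties of Martin's convergents.
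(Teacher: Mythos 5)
Your outline takes a genuinely different route from the paper's, and it has gaps that the paper's route is specifically designed to avoid.

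You propose to iterate Sczech's reciprocity along the Martin continued-fraction coefficients to obtain a closed formula for $\tilde D$, and then perturb a terminal partial quotient to move the Dedekind-sum value while keeping the convergent close to $\alpha$. This is Hickerson's original strategy, and it does not transfer cleanly here. The paper follows Ito instead: it never iterates reciprocity nor tracks how $\tilde D$ drifts under appended quotients. Rather, it shows $\{(x,(\sqrt{|d|}i)^{-1}I(x-z)) : x,z\in\mathbb{C}\setminus K\}$ lies in the closure of the graph. It builds a single matrix
\[
A = M_{m,x}\,T^{u}\,S^{*}\,T^{-u}\,M_{n,z}^{-1}\in GL_2(\mathcal O),
\]
where $M_{m,x}$ and $M_{n,z}$ are convergent matrices for \emph{two} independent targets $x$ and $z$, $S^*$ is a fixed connector, and $T^u$ is a translation with $u\in\mathcal O$ large. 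Since $\Phi$ is a homomorphism that vanishes on each of these factors, $\Phi(A)=0$, and comparing with the explicit formula $\Phi(A)=E_2(0)I(A(\infty)-A^{-1}(\infty))-D(A(\infty))$ gives $D(\alpha)=E_2(0)I(\alpha-\beta)$ exactly, where $\alpha=A(\infty)\approx x$ and $\beta=A^{-1}(\infty)\approx z$ by Lemma~\ref{approxlem}. Density follows because $I(x-z)$ ranges over all of $i\mathbb R$.

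The two points you yourself flag as the ``main obstacle'' and ``secondary technical point'' are precisely where your proposal is incomplete, and they are nontrivial. First, Martin's quotients have the form $a_n/b_{n-1}$ with $a_n\in\mathcal O$ and $b_n$ drawn from a fixed finite admissible set $B$, so the admissible continuations at a given step are not simply a lattice translate intersected with a fundamental domain; you would need to prove that the algorithm actually realizes enough continuations to make the reciprocity perturbation dense, and nothing in the proposal does this. Second, the claim that the shift in $\tilde D$ is ``essentially linear'' in the appended quotient, with a coefficient that can be kept bounded away from zero, is asserted rather than derived; in the Euclidean case this comes out of the explicit reciprocity expansion, but here the expansion involves the $b_n$'s and is not established. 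The paper avoids both issues entirely: it needs only that $|W_1(\infty)|$ and $|W_2(\infty)|$ can be made large by choosing $u$ large (a single elementary computation, with a mild nonvanishing condition on $u$), plus Lemma~\ref{approxlem}. Also note you are missing the role of a second target $z$ and of $A^{-1}(\infty)$; the real coordinate of the graph is produced as $I(x-z)$, not by perturbing the expansion of a single $\alpha$. Without that second degree of freedom, it is not clear your perturbation scheme can actually hit all of $\mathbb R$.
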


\noindent 
Following the method of Ito and Hickerson, our proof first relies on a generalization of the continued fraction algorithm recently established by Martin \cite{martin2023continued} for imaginary quadratic fields of general class number. 
The second ingredient in our proof is Sczech's homomorphism $\Phi : SL_2(\mathcal{O}_K) \rightarrow \mathbb{C}^{+}$ \cite{sczech} given by
$$\Phi
\begin{pmatrix}
a & b \\
c & d
\end{pmatrix} = 
\begin{cases}
        E_2(0)I\left(\dfrac{a+d}{c}\right)-D(a,c), & c \not = 0\\
        E_2(0)I\left(\dfrac{b}{d}\right), & c=0
    \end{cases}
$$
where $I(z) := z - \bar{z} = 2 \operatorname{Im}(z)$. It was extended to $GL_2(\mathcal O_K)$ by Obaisi \cite{obaisi2000eisenstein} as follows: for a more general matrix $A \in GL_2(\mathcal O_K)$, we have 
\[
\Phi
\begin{pmatrix}
a & b \\
c & d
\end{pmatrix} = 
    \begin{cases}
        E_2(0)I\left(\dfrac{a+\det(A)d}{c}\right)-D(a,c), & c \not = 0\\
        E_2(0)I\left(\dfrac{b}{d}\right), & c=0
    \end{cases}
\]
by evaluating \cite[(4.2)]{obaisi2000eisenstein} at the point $u = (0,0)$.
\footnote{Actually, Obaisi's generalization of the Sczech cocycle in \cite{obaisi2000eisenstein} should be multiplied by $-1$, but this does not affect the computations.}

Let $A \in GL_{2}(\mathcal{O})$ act on the extended complex plane by M\"obius transformations. If $d \not= 0$, then we see $A(0) = \frac{b}{d}$. If $c \not = 0$, we also have $A(\infty) = \frac{a}{c}$ and $$A^{-1}(\infty) = \begin{pmatrix}
\frac{d}{\det(A)} & \frac{-b}{\det(A)} \\
\frac{-c}{\det(A)} & \frac{a}{\det(A)}
\end{pmatrix}(\infty) 
= -\frac{d}{c}.
$$
Note that $c$ and $d$ cannot both be zero because $A \in GL_{2}(\mathcal{O})$. In addition, $c = 0$ if and only if $A(\infty) = \infty$, and $d = 0$ if and only if $A(\infty) \not= \infty$.
Therefore, we can rewrite the homomorphism extension as
\begin{equation}
\label{phi}
\Phi(A) = 
\begin{cases}
        E_2(0)I(A(\infty) - \det(A)(A^{-1}(\infty)))-D(A(\infty)), & A(\infty) \not = \infty\\
        E_2(0)I(A(0)), & A(\infty) = \infty
    \end{cases}.
\end{equation}
This is the form of the homomorphism that we shall use. 

A natural question to ask is whether the graph of normalized elliptic Dedekind sums is equidistributed. It would be natural to expect such a result based on the earlier works \cite{KLW} and \cite{myerson1988dedekind}. We also note that the conjecture of Ito in the same paper \cite[\S3]{ito} on the bias of Dedekind sums was recently proved in the case of classical Dedekind sums \cite{minelli2022bias}; it would be interesting to explore the conjecture for elliptic Dedekind sums.

We conclude with a brief summary of the contents of this paper. In Section \ref{algsec}, we recall Martin's continued fraction algorithm. We also derive some properties of Martin's continued fraction algorithm generalizing the properties of the Hurwitz continued fraction algorithm in \cite{ito} and classical continued fractions \cite{hick}. 
In Section \ref{approx} we prove an approximation property for the generalized continued fractions analogous to \cite[Lemma 1]{ito}. In Section \ref{proof} we prove the main theorem.

\section{Martin's algorithm}
\label{algsec}

Martin \cite{martin2023continued} provides a continued fraction algorithm that can be executed in an arbitrary imaginary quadratic field $K$ with ring of integers $\mathcal{O} = \mathcal{O}_K$. For any $z \in \mathbb{C}\setminus K$, terminating the algorithm after $n$ iterations produces an approximation $\frac{p_{n}}{q_{n}} \in K$ of $z$. The algorithm is implemented as follows: For $a, b \in \mathbb{C}$, let
$$S(a,b)=\begin{pmatrix}
a & 1 \\
b & 0
\end{pmatrix}.$$
Fix $\varepsilon\in (0,1)$ Let $B \subset \mathcal{O}\backslash\{0\}$ be a finite admissible subset for $varepsilon$ in the sense of \cite[Definition 2.4]{martin2023continued}. Intuitively, an admissible set is taken to be a set $B$ of denominators that are enough so that the collection of discs $D(a/b,\varepsilon/|b|)$ with $a\in K$ cover the complex plane. By \cite[Theorem 4.3]{martin2023continued}, we may take $B = \{1,2,\dots , \lfloor \sqrt{|d_K|}\rfloor\}$. 

 We define recursively as in  Algorithm 1 in \cite[\S2.2]{martin2023continued} the sequence of matrices
$$
M_{0} = \begin{pmatrix}
1 & 0 \\
0 & 1
\end{pmatrix},\qquad M_{n} = M_{n-1}S\bigg(\frac{a_{n}}{b_{n-1}},\frac{b_{n}}{b_{n-1}}\bigg),\ n \geq 1,
$$
where the coefficients $a_{i} \in \mathcal{O}$, $b_{i} \in B$ can be determined by Algorithm 2 in \cite[\S3.2]{martin2023continued}.
Note that we are following Martin's convention in Algorithm 1 in \cite[\S2.2]{martin2023continued} that $b_{0}=1$. 

We then define the $n$-th convergents $p_{n}$ and $q_{n}$ as the left column entries of $M_{n}$. By the definition of $S$, the right column entries of $M_{n}$ are then $p_{n-1}$ and $q_{n-1}$. Hence, we have
$$
M_{n} = \begin{pmatrix}
p_{n} & p_{n-1} \\
q_{n} & q_{n-1}
\end{pmatrix},
$$
and viewing $M_{n}$ as a M\"{o}bius transformation on the extended complex plane, we can write $M_{n}(\infty) = \frac{p_{n}}{q_{n}}$. It follows inductively from the definition of $S$ that 
\begin{equation}
\label{21}
    \det M_{n} = p_{n}q_{n-1} - p_{n-1}q_{n} = (-1)^{n}b_{n}.
\end{equation}
Martin verifies that the $n$-th approximation $\frac{p_{n}}{q_{n}}$ can be written in the continued fraction form
$$
\frac{p_n}{q_n} = \frac{a_1}{b_1}+\cfrac{b_{0}/b_{1}}{\frac{a_2}{b_2}+\cfrac{b_{1}/b_{2}}{\ddots +\frac{a_{n-1}}{b_{n-1}}+\frac{b_{n-2}/b_{n-1}}{a_{n}/b_{n}}}}.
$$

\subsection{Properties}
\label{app}

We next recall the properties of Martin's continued fraction algorithm proved in \cite{martin2023continued} that we shall require. Define $\mu = \max_{B}|b|$ for a fixed finite admissible set $B$ with $\varepsilon \in (0,1)$. Then for any $n\ge 1$, the following properties hold.

\begin{enumerate}
\item (Lemma 3.1)
 \begin{equation}
 \label{31}
 |z_{n}| \geq 1/\varepsilon.
 \end{equation}

\item (Proposition 3.2) 
    \[
    |q_{n}z-p_{n}| \leq \varepsilon|q_{n-1}z-p_{n-1}|.
    \]
\item 
(Theorem 3.11)
If $0 \leq n' < n$, then
\begin{equation}
\label{311}
|q_{n}| > \frac{(1-\varepsilon^{2})^{2}|q_{n'}z_{n'}|}{4\varepsilon^{n-n'}\mu^{2}}.
\end{equation}
where $z_n = \dfrac{q_{n-1}z-p_{n-1}}{p_n-q_nz}.$
In particular, $|q_{n}| > (1-\varepsilon^{2})^{2}/4\varepsilon^{n}\mu^{2}$. This implies $\lim_{n\to \infty}|q_{n}| = \infty$.
    \item 
    (Corollary 3.12)
For all $n\ge1$,
\begin{equation}
\label{312}
\bigg|z-\frac{p_{n}}{q_{n}}\bigg| < \frac{4\varepsilon^{2n}\mu^{2}}{(1-\epsilon^{2})^{2}},
\end{equation}
which implies that $\lim_{n\to \infty}|z-\frac{p_n}{q_n}| = 0$.
\end{enumerate}

We next derive several additional identities for Martin's generalized continued fractions. They are not used for the main theorem, but they were developed in the course of its proof. We include them here as they establish the general analogues of the Euclidean case. Let $a_n \in \mathcal{O}$ and $b_n \in B$ as before. Now define recursively the following notation
$$[b_0]=b_0, \qquad [b_0,a_1,b_1]=a_1$$
$$[b_0,\dots, a_n, b_n] = \frac{a_n}{b_{n-1}}[b_0,\dots , a_{n-1}, b_{n-1}]+ \frac{b_n}{b_{n-1}}[b_0,\dots, a_{n-2}, b_{n-2}].$$
The first property gives a continued fraction expansion identity for the convergents.

\begin{lemma}
The convergents $p_n$ and $q_n$ from Martin's algorithm can be written as
$$p_n=[b_0, a_1, b_1, a_2, \dots,a_{n-1},b_{n-1}, a_n, b_n]$$
$$q_n=[b_1, a_2, b_2, a_3, \dots,a_{n-1}, b_{n-1}, a_n, b_n].$$
\end{lemma}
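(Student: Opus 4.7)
My plan is a direct induction on $n$, starting from the matrix recursion. Unpacking $M_n = M_{n-1}S(a_n/b_{n-1}, b_n/b_{n-1})$ column by column gives, for $n\ge 2$,
$$
p_n = \frac{a_n}{b_{n-1}}p_{n-1} + \frac{b_n}{b_{n-1}}p_{n-2}, \qquad
q_n = \frac{a_n}{b_{n-1}}q_{n-1} + \frac{b_n}{b_{n-1}}q_{n-2},
$$
which is precisely the shape of the recursion defining the bracket. Writing $P_n := [b_0, a_1, b_1, \ldots, a_n, b_n]$ and $Q_n := [b_1, a_2, b_2, \ldots, a_n, b_n]$, the definition of $[\,\cdot\,]$ immediately yields $P_n = (a_n/b_{n-1})P_{n-1} + (b_n/b_{n-1})P_{n-2}$, and similarly for $Q_n$ once we interpret its sequence as anchored at $b_1$ in place of $b_0$. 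Therefore the whole claim reduces to matching base cases.

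For the $p$-side, $M_0 = I$ gives $p_0 = 1$; and since $b_0 = 1$ by Martin's convention, the definition gives $[b_0] = b_0 = 1$. Likewise, $M_1 = S(a_1/b_0, b_1/b_0)$ produces $p_1 = a_1$, matching $[b_0, a_1, b_1] = a_1$. For the $q$-side, we have $q_0 = 0$ and $q_1 = b_1$. Reading the bracket $[b_1, a_2, b_2, \ldots, a_n, b_n]$ under the substitution $c_0 = b_1,\ c_1 = b_2,\ \alpha_1 = a_2,\ c_2 = b_3,\ \ldots$, it obeys the recursion $[c_0, \alpha_1, c_1, \ldots, \alpha_{k}, c_k] = (\alpha_k/c_{k-1})[\ldots,\alpha_{k-1},c_{k-1}] + (c_k/c_{k-1})[\ldots,\alpha_{k-2},c_{k-2}]$ with initial values $[b_1] := b_1$ and $[b_1, a_2, b_2] := a_2$. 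Thus $q_1 = b_1 = [b_1]$ and $q_2 = (a_2/b_1)q_1 + (b_2/b_1)q_0 = a_2 = [b_1, a_2, b_2]$ both check out.

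Given these base cases, the inductive step is automatic: both sides of the claimed identities satisfy the same two-term recursion with the same initial data, so they agree for all $n$. The only genuine subtlety in the write-up is notational, namely aligning the bracket's index shift for $q_n$ (which begins at $b_1$ rather than $b_0$) with Martin's indexing; once that shift is made explicit, the proof is mechanical.
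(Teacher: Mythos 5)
Your proof is correct and takes essentially the same route as the paper: both derive the two-term recursion for $p_n$ and $q_n$ from the matrix identity $M_n = M_{n-1}S(a_n/b_{n-1},\,b_n/b_{n-1})$, observe that the bracket satisfies the identical recursion, and match the base cases $p_0=1=[b_0]$, $p_1=a_1=[b_0,a_1,b_1]$, $q_1=b_1=[b_1]$, $q_2=a_2=[b_1,a_2,b_2]$. The only difference is cosmetic — you make the index shift for $q_n$ explicit via a substitution, which the paper leaves implicit.
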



\begin{proof}
To prove these statements we will use induction. Beginning with $p_n$, we use two base cases where $n=0$ and $n=1$. When $n=0$, we have that $p_0=1$, and since we assume that $b_0=1$, $p_0=1=b_0=[b_0]$. Additionally when $n=1$, we have that $p_1=a_1=[b_0,a_1,b_1]$. We then assume that the $n-1$ and $n-2$ cases hold, and examine the definition of $[b_0, a_1, b_1, a_2, \dots,a_{n-1},b_{n-1}, a_n, b_n]$. We have that 
\begin{align*}
&[b_0, a_1, b_1, a_2, \dots,a_{n-1},b_{n-1}, a_n, b_n]\\
&= \frac{a_n}{b_{n-1}}[b_0,\dots , a_{n-1}, b_{n-1}]+ \frac{b_n}{b_{n-1}}[b_0,\dots, a_{n-2}, b_{n-2}] \\&= \frac{a_n}{b_n}p_{n-1}+\frac{b_n}{b_{n-1}}p_{n-2}=p_n,
\end{align*}
which proves the equality for $p_n$. 

Next we do the same thing for $q_n$, instead using base cases $n=1$ and $n=2$. When $n=1$, we have that $q_1=b_1=[b_1]$. When $n=2$, we have that $q_2=a_2=[b_1,a_2,b_2]$. Now we assume that the $n-1$ and $n-2$ cases hold, and have by definition and the assumption of the $n-1$ and $n-2$ cases by a similar computation that 
$
[b_1, a_2, b_2, a_3, \dots,a_{n-1},b_{n-1}, a_n, b_n]=q_n.
$
\end{proof}

The following properties then generalize those of Hurwitz continued fractions, as stated in \cite[\S2]{ito}. These properties are analogous to those of Hurwitz continued fractions, with the differences that our coefficients are no longer necessarily in the ring of integers, that $b_n$ is now part of the third property, and that the index is increased by one in \eqref{A3} because we start at $b_0$ without an $a_0$.\footnote{We thank the reviewer for also pointing out to us that these hold as formal identities in the field of fractions of $\mathbb Z$ adjoined with sufficiently many variables.}

\begin{lemma}
The following identities hold:
\begin{equation}
\label{A1}
    [b_0,\dots,a_n, b_n] = \frac{a_1}{b_1}[b_1, \dots, a_n, b_n] +\frac{b_0}{b_1}[b_2, \dots, a_n, b_n],
\end{equation}
\begin{equation}
\label{A2}
        [b_0,a_1, b_1, \dots,a_n, b_n]=[b_n,a_n, \dots, b_1, a_1,b_0],
\end{equation}
\begin{equation}
\label{A3}
[b_m,\dots, b_n][b_{m+1},\dots, b_{n-1}]-[b_{m}, \dots, b_{n-1}][b_{m+1},\dots, b_n] =b_nb_m(-1)^{n-m}.
\end{equation}
\end{lemma}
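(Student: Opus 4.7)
The plan is to prove all three identities by induction, mirroring the standard arguments for continued fraction convergents (compare \cite{hick}). Identity \eqref{A1} records a ``left-end'' recursion that complements the defining ``right-end'' recursion; \eqref{A2} is the reversal symmetry; and \eqref{A3} is the determinant-type relation analogous to \eqref{21}.

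For \eqref{A1}, I would induct on $n$, with the small base cases verified directly. In the inductive step, expand $[b_0,\dots,a_n,b_n]$ via the right-end recursion from the definition into a combination of $[b_0,\dots,a_{n-1},b_{n-1}]$ and $[b_0,\dots,a_{n-2},b_{n-2}]$, apply the inductive hypothesis to each to rewrite them as $\frac{a_1}{b_1}$ and $\frac{b_0}{b_1}$ combinations of brackets starting with $b_1$ or $b_2$, and regroup by common factor. Each regrouped sum has the same shape as the right-end recursion for $[b_1,\dots,a_n,b_n]$ and $[b_2,\dots,a_n,b_n]$, so folding it back recovers the right-hand side of \eqref{A1}.

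For \eqref{A2}, I again induct on $n$ with the bases $n\le 1$ immediate. In the inductive step, apply \eqref{A1} to the left-hand side to split it into $\frac{a_1}{b_1}[b_1,\dots,a_n,b_n]+\frac{b_0}{b_1}[b_2,\dots,a_n,b_n]$; by the inductive hypothesis these reverse to $[b_n,a_n,\dots,a_2,b_1]$ and $[b_n,a_n,\dots,a_3,b_2]$ respectively, yielding
\[
\frac{a_1}{b_1}[b_n,a_n,\dots,a_2,b_1] + \frac{b_0}{b_1}[b_n,a_n,\dots,a_3,b_2].
\]
This is precisely what the right-end recursion produces when applied to the reversed bracket $[b_n,a_n,\dots,a_1,b_0]$, since on the reversed list the role of the ``last two entries'' is played by $a_1,b_0$ (with $b_1$ as the preceding $b$-entry). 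This matching is the one place needing care with the bookkeeping.

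For \eqref{A3}, set $A_k := [b_m,a_{m+1},\dots,a_k,b_k]$ and $B_k := [b_{m+1},a_{m+2},\dots,a_k,b_k]$. Both sequences satisfy the same three-term recurrence $X_k = \frac{a_k}{b_{k-1}}X_{k-1} + \frac{b_k}{b_{k-1}}X_{k-2}$ inherited from the definition. I would induct on $n-m$, with base case $n-m=2$ checked by direct evaluation. Substituting the recursion for $A_n$ and $B_n$ into $A_nB_{n-1}-A_{n-1}B_n$, the $\frac{a_n}{b_{n-1}}$ cross-terms cancel, leaving
\[
A_nB_{n-1}-A_{n-1}B_n = -\frac{b_n}{b_{n-1}}\bigl(A_{n-1}B_{n-2}-A_{n-2}B_{n-1}\bigr),
\]
and the inductive hypothesis converts this into $-\frac{b_n}{b_{n-1}}\cdot b_{n-1}b_m(-1)^{n-1-m} = b_nb_m(-1)^{n-m}$. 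I do not expect any of the three proofs to present a serious obstacle; the main effort is the careful index bookkeeping in \eqref{A2} to align the reversed-list right-end recursion with the left-end recursion \eqref{A1} on the original list.
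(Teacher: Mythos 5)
Your proposal is correct and essentially tracks the paper's own argument: \eqref{A2} is proved by the same induction that pivots on \eqref{A1}, and \eqref{A3} by expanding via the defining recursion, observing the $\frac{a_n}{b_{n-1}}$ cross-terms cancel, and invoking the inductive hypothesis. The only mild divergences are cosmetic: for \eqref{A1} the paper argues by manipulating the continued-fraction value $p_n/q_n$ and clearing denominators (``one checks that\ldots''), whereas you give a direct induction using the right-end recursion plus regrouping, which is a bit cleaner and avoids appealing to the continued-fraction interpretation; and for \eqref{A3} the paper redundantly carries out the induction both with $m$ fixed and with $n$ fixed, while you correctly note that a single induction on $n-m$ with base case $n-m=2$ suffices.
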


\begin{proof}
Beginning with the bracket notation for $\frac{p_n}{q_n}$, one checks that 
\begin{equation*}
    \frac{[b_0,a_1,\dots,a_n,b_n]}{[b_1,a_2,\dots,a_n,b_n]}= \frac{a_1}{b_1} + \frac{b_0}{b_1}\cdot \frac{[b_2,a_3,\dots,a_n,b_n]}{[b_1,a_2,\dots,a_n,b_n]}.
\end{equation*}
Multiply the left and rightmost sides of this equality to obtain
\begin{equation*}
    [b_0,\dots,a_n, b_n] = \frac{a_1}{b_1}[b_1, \dots, a_n, b_n] +\frac{b_0}{b_1}[b_2, \dots, a_n, b_n].
\end{equation*}
This proves the first identity \eqref{A1}.

We prove the second property by induction. To start, we establish the base cases of $n=0$ and $n=1$. For $n=0$, we have that $[b_0]=b_0=[b_0]$. For $n=1$ we have that $[b_0,a_1,b_1]=a_1=[b_1,a_1,b_0]$. Having established the base cases, we assume that the $n-1$ and $n-2$ cases hold. 
We have that 
$$[b_0,a_1,\dots, a_n,b_n]= \frac{a_1}{b_1}[b_1,a_2,\dots,a_n,b_n]+\frac{b_0}{b_1}[b_2,a_3,\dots,a_n,b_n]$$ by (2), and because we are assuming the $n-1$ and $n-2$ cases, this is equal to
\begin{align*}
\frac{a_1}{b_1}[b_n,a_n,\dots,a_2,b_1]+\frac{b_0}{b_1}[b_n,a_n,\dots,a_3,b_2]
&=[b_n,a_n,\dots,a_1,b_0].
\end{align*}
We prove \eqref{A3} both when $m$ is fixed and when $n$ is fixed. When we fix $m$, we have by direct computation that 
\begin{multline*}
    [b_{m+1},a_m,\dots,a_{n-1},b_{n-1}]\left(\frac{a_n}{b_{n-1}}[b_m,a_{m+1},\dots,a_{n-1},b_{n-1}]+\frac{b_n}{b_{n-1}}[b_m,a_{m+1},\dots,a_{n-2},b_{m-2}] \right)
    \\-[b_m,a_{m+1},\dots,a_{n-1},b_{n-1}]\left(\frac{a_n}{b_{n-1}}[b_{m+1},a_{m+2},\dots,a_{n-1},b_{n-1}]+ \frac{b_n}{b_{n-1}}[b_{m+1},a_{m+2},\dots,a_{n-2},b_{n-2}]\right)
    \end{multline*}
    equals
    \[
    \frac{b_n}{b_{n-1}}\left((-1)^{(n-1)-m}b_{n-1}b_m\right)=(-1)^{(n-m)}b_nb_m.
    \]
When we fix $n$, we have similarly that  
\begin{multline*}
    [b_{m+1},a_m,\dots,a_{n-1},b_{n-1}]\left(\frac{a_{m+1}}{b_{m+1}}[b_{m+1},a_{m+2},\dots,a_{n},b_{n}+\frac{b_m}{b_{m+1}}[b_{m+2},a_{m+3},\dots,a_{n},b_{n}] \right)
    \\-[b_{m+1},a_{m+2},\dots,a_{n},b_{n}]\left(\frac{a_{m+1}}{b_{m+1}}[b_{m+1},a_{m+2},\dots,a_{n-1},b_{n-1}]+ \frac{b_m}{b_{m+1}}[b_{m+2},a_{m+3},\dots,a_{n-1},b_{n-1}]\right)
\end{multline*}
equals
\[
\frac{b_m}{b_{m+1}}\left((-1)^{n-(m+1)}b_{m+1}b_n\right)=(-1)^{(n-m)}b_nb_m.
\]
\end{proof}

\section{Approximation by generalized continued fractions}
\label{approx}

We now prove an approximation result generalizing \cite[Lemma 1]{ito} to the convergents produced by Martin's algorithm. Denote
$$\zeta = \frac{(1-\varepsilon^2)^2}{4\varepsilon^2\mu^2},$$ where $\varepsilon$ and $\mu$ are defined as above.
\begin{lemma}
\label{approxlem}
 Consider the continued fraction expansion of $z\in \mathbb C \backslash K$. Let $\delta \in (0, \zeta)$ and $W \in GL_2(\mathcal{O})$ be such that $W(\infty) \not= \infty$ and $|W(\infty)| \geq \frac{1}{\zeta - \delta}$. Then, $M_{n}W(\infty) \not= \infty$, and for any $\epsilon>0$, there exists an N $\in \mathbb{N}$  independent of $W$ such that 
$$|z-M_{n}W(\infty)| < \epsilon$$ for all $n \geq N$.
\end{lemma}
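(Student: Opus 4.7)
The plan is to set $w = W(\infty) \in \mathbb{C}$ and analyze the Möbius action
$$M_n(w) = \frac{p_n w + p_{n-1}}{q_n w + q_{n-1}}$$
by splitting the error via the triangle inequality as $|z-M_n(w)| \le |z - p_n/q_n| + |p_n/q_n - M_n(w)|$. Corollary 3.12 already gives a $W$-independent bound on the first term, so the work is to control the second term, which in turn reduces to showing that $|q_n w + q_{n-1}|$ is bounded below by a quantity of the same order as $|q_n|$.

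The crux is a uniform ratio estimate. Applying property (3) with $n' = n-1$ yields $|q_n| > (1-\varepsilon^2)^2|q_{n-1}||z_{n-1}|/(4\varepsilon\mu^2)$, and combining with the bound $|z_{n-1}| \ge 1/\varepsilon$ from property (1) gives $|q_{n-1}|/|q_n| < 1/\zeta$ for $n \ge 2$. Since the hypothesis $|w| \ge 1/(\zeta - \delta) > 1/\zeta$ has a definite margin $\delta > 0$, this forces
$$|q_n w + q_{n-1}| \ge |q_n||w| - |q_{n-1}| > \frac{|q_n|}{\zeta - \delta} - \frac{|q_n|}{\zeta} = \frac{\delta |q_n|}{\zeta(\zeta - \delta)} > 0,$$
which already shows $M_n W(\infty) \ne \infty$. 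Next, using the determinant identity \eqref{21} in the form $p_{n-1}q_n - p_n q_{n-1} = (-1)^{n+1}b_n$, a direct computation gives
$$\left|\frac{p_n}{q_n} - M_n(w)\right| = \frac{|b_n|}{|q_n||q_n w + q_{n-1}|} \le \frac{\mu\, \zeta(\zeta - \delta)}{\delta\, |q_n|^2}.$$

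Combined with Corollary 3.12, we obtain
$$|z - M_n(w)| < \frac{4\varepsilon^{2n}\mu^2}{(1-\varepsilon^2)^2} + \frac{\mu\, \zeta(\zeta-\delta)}{\delta\,|q_n|^2},$$
whose right-hand side depends on $n$, on the fixed data $\varepsilon, \mu, \zeta, \delta$, and on the sequence $|q_n|$ attached to $z$, but not on $W$. Since $\varepsilon < 1$ and $|q_n| \to \infty$ by property (3), we can choose $N$ so that both summands are less than $\epsilon/2$ for every $n \ge N$, and this $N$ is manifestly independent of $W$. The main obstacle is the uniform ratio bound $|q_{n-1}/q_n| < 1/\zeta$: this is exactly what calibrates the hypothesis $|W(\infty)| \ge 1/(\zeta - \delta)$ so that the margin $\delta$ translates into a lower bound on $|q_n w + q_{n-1}|$ of order $|q_n|$; everything afterward is an assembly of the Martin-algorithm estimates already recorded.
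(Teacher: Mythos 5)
Your proposal is correct and is essentially the same argument as the paper's: triangle inequality decomposition, the ratio bound $|q_n| > \zeta|q_{n-1}|$ obtained by combining Theorem 3.11 (with $n'=n-1$) and Lemma 3.1, a lower bound on the denominator of $M_nW(\infty)$ via the reverse triangle inequality, and the determinant identity \eqref{21} to bound the second term. The only cosmetic difference is parametrization — you work directly with $w=W(\infty)$, whereas the paper substitutes $w=1/W(\infty)$ so that $|w|\le\zeta-\delta$ is small and the denominator estimate reads $|q_n+wq_{n-1}|>\delta|q_{n-1}|$; the two are equivalent up to multiplying by $|W(\infty)|$.
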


\begin{proof}
Since $0 < \frac{1}{\zeta - \delta} \leq |W(\infty)| < \infty$, we can define $w := \frac{1}{W(\infty)}$. Note that $|w|\leq \zeta-\delta$ by the assumption of the lemma. In addition, we have
\begin{align*}
    M_{n}W(\infty) &= \begin{pmatrix}
p_{n} & p_{n-1} \\
q_{n} & q_{n-1}
\end{pmatrix}(W(\infty))\\
    &= \frac{p_{n}W(\infty) + p_{n-1}}{q_{n}W(\infty) + q_{n-1}}\\
    &= \frac{p_{n} + \frac{1}{W(\infty)} p_{n-1}}{q_{n} + \frac{1}{W(\infty)}q_{n-1}}\\
    &= \frac{p_{n} + wp_{n-1}}{q_{n} + wq_{n-1}}.
\end{align*}

In order to show that $M_{n}W(\infty)\not=\infty$, we must show that $q_{n} + wq_{n-1} \not= 0$. Setting $n' = n-1$ in \eqref{311} and combining with \eqref{31}, we see that for $n \geq 2$,
$$|q_n|>\frac{(1-\varepsilon^2)^2}{4\varepsilon\mu^2}|q_{n-1}z_{n-1}| \geq \frac{(1-\varepsilon^2)^2}{4\varepsilon^2\mu^2}|q_{n-1}| = \zeta|q_{n-1}|.
$$
Using the triangle inequality, we have 
\begin{align*}
|q_n+wq_{n-1}|&\geq |q_n|-|wq_{n-1}|\\
&>\zeta|q_{n-1}| - |w||q_{n-1}|\\
&\geq \delta|q_{n-1}|.
\end{align*}
Since $|q_{n}+wq_{n-1}| > \delta|q_{n-1}|>0$, we can conclude that $q_{n}+wq_{n-1} \not= 0$, as desired.

Moving on to the second part of the proof, we must show that there exists $N \in \mathbb{N}$ such that
$$|z-M_{n}W(\infty)| = \bigg|z-\frac{p_{n} + wp_{n-1}}{q_{n} + wq_{n-1}}\bigg| < \epsilon
$$
for all $n\ge N$.

Applying the triangle inequality, we have
$$\left|z - \frac{p_n + wp_{n-1}}{q_n+wq_{n-1}}\right| \leq \left|z - \frac{p_n}{q_n}\right| + \left|\frac{p_n}{q_n} - \frac{p_n + wp_{n-1}}{q_n+wq_{n-1}}\right|.$$ 
By \eqref{312}, the first term above tends to zero, so we can define $N_{1} \in \mathbb{N}$ such that for all $n \geq N_{1}$, we have
$$
\left|z - \frac{p_n}{q_n}\right| < \frac{\epsilon}{2}.
$$
Now we are left to bound the second term above. By combining fractions, we see that
\begin{align*}
\left|\frac{p_n}{q_n}- \frac{p_n + wp_{n-1}}{q_n+wq_{n-1}}\right| 
&=\left|\frac{p_{n}q_{n}+wp_nq_{n-1}-p_{n}q_{n}-wp_{n-1}q_n}{q_n(q_n+wq_{n-1})}\right| \\
&=\left|\frac{w(p_nq_{n-1}-p_{n-1}q_n)}{q_n(q_n+wq_{n-1})}\right|.
\end{align*}
By \eqref{21}, we see that
\begin{align*}
\left|\frac{w(p_nq_{n-1}-p_{n-1}q_n)}{q_n(q_n+wq_{n-1})}\right|
&= \left|\frac{wb_n(-1)^{n}}{q_n(q_n+wq_{n-1})}\right| = \left|\frac{wb_n}{q_n(q_n+wq_{n-1})}\right|.
\end{align*}
Finally, using that $|q_{n}+wq_{n-1}| > \delta|q_{n-1}|$, we have
\begin{align*}
 \left|\frac{wb_n}{q_n(q_n+wq_{n-1})}\right| 
&< \left|\frac{wb_n}{\delta q_n q_{n-1}}\right| \leq \frac{\mu|w|}{|\delta|}\frac{1}{|q_{n}q_{n-1}|}.
\end{align*}
Since the rightmost side of this inequality tends to 0  as $n$ approaches infinity by \eqref{311}, there exists $N_2 \in \mathbb{N}$ such that for all $n \geq N_2$, we have
$$
\left|\frac{p_n}{q_n} - \frac{p_n + wp_{n-1}}{q_n+wq_{n-1}}\right| < \frac{\mu|w|}{|\delta|}\frac{1}{|q_{n}q_{n-1}|}< \frac{\epsilon}{2}.
$$
Letting $N=$ max$\{N_1,N_2\}$ completes the proof.
\end{proof}

\section{Proof of density}
\label{proof}

We now prove the main theorem. 
We utilize the homomorphism \eqref{phi} to prove density in the general case. Following along with Ito's proof of \cite[Theorem 2]{ito}, we will show that the set
$$\{(x, (\sqrt{|d|}i)^{-1}I(x-z)): x, z \in \mathbb{C}-K\}$$
is contained in the closure of the set
$$\{(\alpha, \tilde{D}(\alpha)): \alpha \in K\}.
$$
Recall from the discussion in the introduction that any element of $K$ can be written as $a/c$ with $a,c\in \mathcal O_K$. This representation is non-unique but there are only finitely many choices, and here we are fixing a choice of representative. Our proof is independent of this choice.  

Let $\epsilon>0$ and $x, z \in \mathbb{C}-K$. 
Suppose first that for $|\alpha - x|< \epsilon,$ we have
\begin{equation}
    \label{dalph}
|\tilde{D}(\alpha)-(\sqrt{|d|}i)^{-1}I(x-z)| \leq 4(\sqrt{|d|})^{-1}\epsilon.
\end{equation}
It then follows that 
\[
(x, (\sqrt{|d|}i)^{-1}I(x-z))\in \overline{\{(\alpha, \tilde{D}(\alpha)): \alpha \in K\}},\]
since that $K$ is dense in $\mathbb C$ (see for example \cite{Oswald}).   It thus remains to prove \eqref{dalph}. We shall do this by constructing an $A\in GL_2(\mathcal O)$ such that $\alpha=A(\infty)$, which approximates $x$, and $\beta=A^{-1}(\infty)$ approximates $z$. Then \eqref{dalph} will follow from evaluating $\Phi(A)$ in two ways. 

By Lemma \ref{approxlem}, we can choose sufficiently large $m, n \in \mathbb{N}$ such that for any $W \in GL_2(\mathcal{O})$ with $|W(\infty)| \geq \frac{2}{\zeta}$, we have
$$|x-M_{m, x}W(\infty)| < \epsilon$$
and
$$|z-M_{n, z}W(\infty)| < \epsilon,$$
where $M_{m, x}$ denotes the $m$-th convergent matrix in the continued fraction representation of $x$ and similarly for $M_{n, z}$.

Let $S = M_{m, x}^{-1}M_{n, z}$, and let  $S^{*}$ be given by
$$S^{*} = 
\begin{cases}
        S, & |S(\infty)| \not = \infty\\
        \begin{pmatrix}
0 & -1 \\
1 & 0
\end{pmatrix}S, & |S(\infty)| = \infty.
    \end{cases}
$$
We claim that $|S^{*}(\infty)| \not = \infty$. First suppose that $|S(\infty)| \not = \infty$. In this case, we simply have $|S^{*}(\infty)| = |S(\infty)| \not = \infty$. Next suppose $|S(\infty)| = \infty$. In this case, we have
$$
|S^{*}(\infty)| = \bigg|\begin{pmatrix}
0 & -1 \\
1 & 0
\end{pmatrix}S(\infty)\bigg| = \bigg|\begin{pmatrix}
0 & -1 \\
1 & 0
\end{pmatrix}(\infty)\bigg| = 0 \not = \infty.
$$
In both cases, this also tells us that $|S^{*^{-1}}(\infty)| \not = \infty$ because $|S^{*^{-1}}(\infty)| = \infty$ would imply $|S^{*}(\infty)| = \infty$, a contradiction.

We also note that in either case  we have 
$$\Phi(S^{*}) = \Phi(S).$$ In the first case, this is clear because $S^{*} = S$. In the second case, we use the fact that $\Phi$ is a homomorphism to see that
$$\Phi(S^{*}) = \Phi\bigg(\begin{pmatrix}
0 & -1 \\
1 & 0
\end{pmatrix}S\bigg) = \Phi\begin{pmatrix}
0 & -1 \\
1 & 0
\end{pmatrix} + \Phi(S) = \Phi(S),$$
since 
$$4\Phi\begin{pmatrix}
0 & -1 \\
1 & 0
\end{pmatrix} = \Phi\left(\begin{pmatrix}
0 & -1 \\
1 & 0
\end{pmatrix}^4\right) = \Phi(I) = 0.$$
Finally, we note that $\det(S^{*}) = \det(S)$.

Now given $u \in \mathcal{O}$, we can define the matrix $A \in GL_2(\mathcal{O})$ by
$$A = M_{m, x}T^{u}S^{*}T^{-u}M_{n,z}^{-1},
\qquad 
T^{u} = \begin{pmatrix}
1 & u \\
0 & 1
\end{pmatrix},\ T^{-u} = \begin{pmatrix}
1 & -u \\
0 & 1
\end{pmatrix}.
$$
Using the property that $\det(S^*) = \det(S) = \det(M_{m, x}^{-1}M_{n, z})$, we can verify by direct computation that $\det(A) = 1$.
Next, let 
\[
\alpha = A(\infty),\qquad \beta = A^{-1}(\infty) = M_{n,z}T^{u}S^{*^{-1}}T^{-u}M^{-1}_{m,x}(\infty)
\]
and
\[
W_{1} = T^{u}S^{*}T^{-u}M^{-1}_{n, z},\qquad W_{2} = T^{u}S^{*^{-1}}T^{-u}M^{-1}_{m,x},
\]
so that 
\[
\alpha = M_{m,x}W_{1}(\infty),\qquad \beta = M_{n,z}W_{2}(\infty).
\]
We now pick $u \in \mathcal{O}$ sufficiently large so that 
\[
|W_{1}(\infty)| \geq \frac{2}{\zeta},
\qquad |W_{2}(\infty)| \geq \frac{2}{\zeta}.
\]
Note that $\frac{2}{\zeta}$ can be written as $\frac{1}{\zeta-\delta}$ for $\delta = \frac{\zeta}{2} \in (0, \zeta)$. 

We must make sure that we are always able to pick a coefficient $u$ with this property. Let $S^{*}=\begin{pmatrix}
s_{1} & s_{2} \\
s_{3} & s_{4}
\end{pmatrix}$. Note that $ M^{-1}_{n, z}=\begin{pmatrix}
q_{n-1} & -p_{n-1} \\
-q_{n} & p_{n}
\end{pmatrix}$, where these are convergents of $z$ using Martin's algorithm. Before proceeding with any computation, we add the simple condition that $u \not= \frac{s_{4}q_{n}-s_{3}q_{n-1}}{s_{3}q_{n}}$, noting that $q_{n}\not=0$ and $s_{3}\not= 0$ since $S^{*} \in GL_2(\mathcal{O})$ and $S^{*}(\infty) \not = \infty$. This condition implies that $s_{3}(q_{n-1}+q_{n}u)-s_{4}q_{n}\not=0$ and, as we will see below, ensures that $W_{1}(\infty)\not=\infty$. Now, we can see that
\begin{align*}
    W_{1} &= T^{u}S^{*}T^{-u}M^{-1}_{n, z}\\
&= \begin{pmatrix}
1 & u \\
0 & 1
\end{pmatrix}\begin{pmatrix}
s_{1} & s_{2} \\
s_{3} & s_{4}
\end{pmatrix}\begin{pmatrix}
1 & -u \\
0 & 1
\end{pmatrix}
\begin{pmatrix}
q_{n-1} & -p_{n-1} \\
-q_{n} & p_{n}
\end{pmatrix}\\
&= \begin{pmatrix}
s_{1}+s_{3}u & s_{2}+s_{4}u \\
s_{3} & s_{4}
\end{pmatrix} \begin{pmatrix} 
q_{n-1} + q_{n}u & -p_{n-1} - p_{n}u \\
-q_{n} & p_{n}
\end{pmatrix}\\
&= \begin{pmatrix}
(s_{1}+s_{3}u)(q_{n-1}+q_{n}u) - (s_{2}+s_{4}u)q_{n} & (s_{1}+s_{3}u)(-p_{n-1} - p_{n}u)+(s_{2}+s_{4}u)p_{n} \\  
s_{3}(q_{n-1}+q_{n}u) - s_{4}q_{n} & s_{3}(-p_{n-1} - p_{n}u)+s_{4}p_{n}
\end{pmatrix}.
\end{align*}
We can use this, along with the triangle inequality, to see that
\begin{align*}
|W_{1}(\infty)| &= \bigg|\frac{(s_{1}+s_{3}u)(q_{n-1}+q_{n}u) - (s_{2}+s_{4}u)q_{n}}{s_{3}(q_{n-1}+q_{n}u) - s_{4}q_{n}}\bigg|\\
&=\bigg|\frac{u(s_{3}(q_{n-1}+q_{n}u) - s_{4}q_{n}) + s_{1}(q_{n-1}+q_{n}u) - s_{2}q_{n}}{s_{3}(q_{n-1}+q_{n}u) - s_{4}q_{n}}\bigg|\\
&=\bigg|u+\frac{s_{1}(q_{n-1}+q_{n}u) - s_{2}q_{n}}{s_{3}(q_{n-1}+q_{n}u) - s_{4}q_{n}}\bigg|\\
&\geq |u| - \bigg|\frac{s_{1}(q_{n-1}+q_{n}u) - s_{2}q_{n}}{s_{3}(q_{n-1}+q_{n}u) - s_{4}q_{n}}\bigg|.
\end{align*}
As $|u|$ tends to infinity, the second term in the inequality above tends to 
$$-\left|\frac{s_{1}}{s_{3}}\right| = -|S^{*}(\infty)|,$$
which, by definition of $S^{*}$, is not infinite. Therefore, as $|u|$ tends to infinity, $|W_{1}(\infty)|$ also tends to infinity. We can use a very similar argument, which uses the $m$-th convergents of $x$ and the fact that $S^{*^{-1}}(\infty) \not = \infty$, to show that $|W_{2}(\infty)|$ can also be made arbitrarily large as $|u|$ becomes arbitrarily large. Again, we can impose a simple condition on the value of $u$ of the same form as above in order to ensure that $W_{2}(\infty) \not= \infty$.

Let $\delta_{1} = \alpha - x$ and $\delta_{2} = \beta - z$. We can now apply Lemma \ref{approxlem} to see that $$|\delta_{1}| = |x-\alpha| = |x-M_{m,x}W_{1}(\infty)| < \epsilon.$$ By the same logic, we have $|\delta_{2}|<\epsilon$. Since we know $\alpha$ and $\beta$ are within $\epsilon$ of $x$ and $z$, respectively, we can be sure that $\alpha$ and $\beta$ are both well-defined complex numbers and not infinite.

To conclude the proof, we shall evaluate $\Phi(A)$ in two different ways. Note that since $A(\infty) = \alpha \not = \infty$, we are in the first case of \eqref{phi}. If we apply $\Phi$ to $A$ directly, we get
\begin{align*}
    \Phi(A) &= E_2(0)I(A(\infty)-A^{-1}(\infty))-D(A(\infty)) \\
    &= E_2(0)I(\alpha-\beta)-D(\alpha) \\
    &= E_2(0)I(x+\delta_{1}-z-\delta_{2})-D(\alpha).
\end{align*}
If we apply $\Phi$ to $A$ by breaking it up into smaller matrices in $GL_2(\mathcal{O})$, we see that 
\begin{align*}
\Phi(A) &= \Phi(M_{m, x}T^{u}S^{*}T^{-u}M_{n,z}^{-1})\\
&= \Phi(M_{m, x}) + \Phi(T^{u}) + \Phi(M_{m, x}^{-1}M_{n, z}) + \Phi(T^{-u}) + \Phi(M_{n,z}^{-1})\\
&
= 0.
\end{align*}
Therefore, we have 
$$E_2(0)I(x+\delta_{1}-z-\delta_{2})-D(\alpha) = 0,$$
which gives us $D(\alpha) = E_2(0)I(x+\delta_{1}-z-\delta_{2})$ and hence
    \[
    \tilde{D}(\alpha) = (\sqrt{|d|}i)^{-1}I(x+\delta_{1}-z-\delta_{2}),
    \]
as required, which we note is independent of the choice of representation of $\alpha$. 

\subsection*{Acknowledgments}

The authors thank Daniel Martin for communications regarding their paper. This research was conducted at the REU site: Mathematical Analysis and Applications at the University of Michigan-Dearborn. 

\subsection*{Declarations}
 
\subsubsection*{Ethical Approval }
Not applicable.
 
\subsubsection*{Funding}
 This work was partially supported the NSF grant DMS-1950102 and NSA grant H98230-19. The last author was also supported by NSF grant DMS-2212924.
 
\subsubsection*{Availability of data and materials }
Not applicable.


\bibliographystyle{plain}
\bibliography{sources}
\end{document}